\numberwithin{equation}{section}
\theoremstyle{plain}
 \newtheorem{theorem}{Theorem}[section]
 \newtheorem{gslemma}[theorem]{Swing lemma}
 \newtheorem{strswlemma}[theorem]{Theorem (Strong swing lemma)} 
 \newtheorem{sslemma}[theorem]{Slim swing lemma} 
 \newtheorem{proposition}[theorem]{Proposition}
\theoremstyle{definition}
 \newtheorem{definition}[theorem]{Definition}
 \newtheorem{remark}[theorem]{Remark}
\newenvironment{enumeratei}{\begin{enumerate}[\quad\upshape (i)]} {\end{enumerate}}
\newcommand \ustar [1] {{{#1}^\ast}}
\newcommand \dstar [1] {{{#1}_\ast}}
\newcommand \sseq {\textup{(\ref{defSsda}$\&$\ref{defSsdb})}-sequence}
\newcommand \pseq {SL-sequence}
\newcommand \spseq {SSL-sequence}
\newcommand \gseq {SLG-sequence}
\newcommand \sspan {\textup{(\ref{defSsda}$\&$\ref{defSsdb})}-span}
\newcommand \spspan {SSL-span}
\newcommand \ba {\boldsymbol\alpha}
\DeclareMathOperator{\Eye}{Eyes}
\newcommand \tbf[1] {\textbf{#1}}  
\newcommand\ideal[1]{\mathord\downarrow #1}
\newcommand \con {\textup{con}}
\newcommand \inp {{\mathfrak p}}
\newcommand \inq {{\mathfrak q}} 
\newcommand \ins {{\mathfrak s}} 
\newcommand \inr {{\mathfrak r}} 
\newcommand\set [1]{\{#1\}}
\newcommand \tuple [1] {\langle #1 \rangle}
\newcommand \pair [2] {\tuple{#1,#2}}
\newcommand\red[1]{{\textcolor{red}{#1}}}
\begin{document}
\title[Swing lattice game  and the swing lemma]
{Swing lattice game and a short proof of the swing lemma for planar semimodular lattices }

\author[G.\ Cz\'edli]{G\'abor Cz\'edli}
\email{czedli@math.u-szeged.hu}
\urladdr{http://www.math.u-szeged.hu/\textasciitilde{}czedli/}
\address{University of Szeged\\ Bolyai Institute\\Szeged,
Aradi v\'ertan\'uk tere 1\\ Hungary 6720}
\author[G.\ Makay]{G\'eza Makay}
\email{makayg@math.u-szeged.hu}
\urladdr{http://www.math.u-szeged.hu/\textasciitilde{}makay/}
\address{University of Szeged\\ Bolyai Institute\\Szeged,
Aradi v\'ertan\'uk tere 1\\ Hungary 6720}

\thanks{This research was supported by
NFSR of Hungary (OTKA), grant number K 115518}

\begin{abstract} The swing lemma, due to G.\ Gr\"atzer for slim semimodular lattices and extended by G.\ Cz\'edli and  G.\ Gr\"atzer for all planar semimodular lattices, describes the congruence generated by a prime interval in an efficient way. Here we present a new proof for this lemma, which is shorter than the earlier two. 
Also, motivated by the swing lemma and mechanical pinball games with flippers, we construct an online game called Swing lattice game. A computer program realizing this game is available from the authors' websites. 
\end{abstract}

\subjclass {06C10}

\dedicatory{Dedicated to the eighty-fifth birthday of B\'ela Cs\'ak\'any}

\keywords{swing lemma, Swing Lattice Game, semimodular lattice, planar lattice, lattice congruence}

\date{\red{\tbf{\Large{July 22, 2016}} {\Small{(submitted to Acta Sci.\ Math. (Szeged): July 15, 2016)}}}}

\maketitle

\section{Introduction}
The last decade has witnessed a rapid development of the theory of planar semimodular lattices; see the bibliographic section in the present paper and see many additional papers referenced in the book chapter Cz\'edli and Gr\"atzer~\cite{czgggltsta}. Also, see \cite{czgggltsta} for a survey and for all concepts not defined here. 
Since every planar semimodular lattice can be obtained from a slim semimodular lattice, a particularly intensive attention was paid to slim (hence necessarily planar) semimodular lattices; definitions will be given later. 
%

\begin{figure}[ht] 
\centerline
{\includegraphics[scale=1.0]{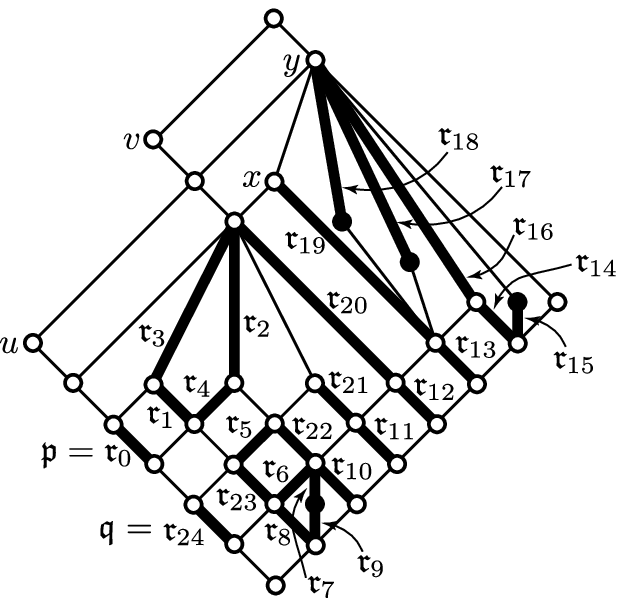}}
\caption{A \pseq{} from $\inp$ to $\inq$ in a planar semimodular lattice}\label{figpsa}
\end{figure}

\subsection*{First target: the swing lemma}
Semimodularity is \emph{upper semimodularity}, that is, a lattice is \emph{semimodular} if the implication  $x\preceq y\Rightarrow x\vee z\preceq y\vee z$ holds for all of its elements $x$, $y$ and $z$. A lattice $L$ is \emph{planar} if it has a planar Hasse-diagram.
Although Cz\'edli~\cite{czgdiagrectext}, which is a long paper, assigns a unique planar diagram to an arbitrary planar semimodular lattice,
we will not rely on \cite{czgdiagrectext} in the present elementary paper; we always assume that a planar diagram of our lattice is \emph{fixed} somehow. (Some concepts, like ``left'' or ''eye'', will depend on the choice of the diagram, but this fact will not cause any trouble.)  
\emph{Edges} $\inp=[a,b]$ of (the diagram of) $L$ are
also called \emph{prime intervals}. For a prime interval $\inp=[a,b]$ of $L$, we denote $a$ and $b$ by $0_\inp$ and $1_\inp$, respectively. It follows from semimodularity that the edges divide the area of the diagram into quadrangles, which we call \emph{$4$-cells}; more details will be given later. 
The least congruence collapsing (the two elements of) a prime interval $\inp$ is denoted by $\con(\inp)$ or $\con(0_\inp,1_\inp)$. In order to characterize whether $\con(\inp)$ collapses another prime interval $\inq$ or not, we need the following definition.

\begin{definition}\label{defSsd} Let $\inr$ and $\ins$ be distinct prime intervals of a planar  semimodular lattice such that they belong to the same $4$-cell $S$.
\begin{enumeratei}
\item\label{defSsda} If $\inr$ and $\ins$ are opposite sides of $S$ then $\inr$ is \emph{cell-perspective} to $\ins$.
\item\label{defSsdb} If $1_\inr=1_\ins$,  $1_\inr$ has at least three lover covers, and $0_\ins$ is neither the leftmost, nor the rightmost lower cover of $1_\inr$, then $\inr$ \emph{swings} to $\ins$.
\item\label{defSsdc} If $0_\inr=0_\ins$,  $0_\inr$ has at least three covers, and $1_\ins$ is neither the leftmost, nor the rightmost  cover of $0_\inr$, then $\inr$ \emph{tilts} to $\ins$.
\end{enumeratei}
For $n\in\set{0,1,2,\dots}$, a sequence 
\begin{equation}
\vec{\inr}: \inr_0,\inr_1,\dots,\inr_n
\label{eqsseqpseq}
\end{equation}
of prime intervals is called an 
\emph{\pseq}  if 
for each $i\in\set{1,\dots,n}$, $\inr_{i-1}$ is cell-perspective to or swings to  or tilts to $\inr_i$. (The acronym ``SL'' comes from ``swing lemma''.) In $\vec{\inr}$, $\inr_0$ and $\inr_n$ play a distinguished role, and we often say that $\vec{\inr}$ is an 
\emph{\pseq{} from $\inr_0$ to $\inr_n$}. It is \emph{cyclic} if $\inr_0=\inr_{n}$.
\end{definition}

\begin{figure}[ht]
\centerline
{\includegraphics[scale=1.0]{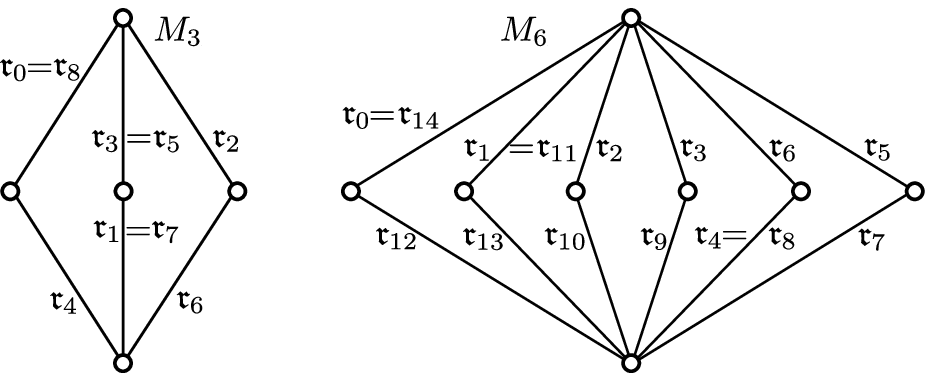}}
\caption{Cyclic \spseq{}s in $M_3$ and $M_6$}\label{figcyclM6}
\end{figure}

While \eqref{defSsda} describes a symmetric relation,  \eqref{defSsdb} and  \eqref{defSsdc} do not. To see some examples, consider the planar semimodular lattice in Figure~\ref{figpsa}. Then $\inr_{11}$ and  
$\inr_{12}$ are mutually cell-perspective to each other,  $\inr_{2}$ and  $\inr_{3}$  mutually swing to each other,  so do $\inr_{16}$ and $\inr_{17}$; $\inr_{8}$ tilts to $\inr_{9}$, and $\inr_{6}$ swings to $\inr_{7}$. However, $\inr_{9}$ does not tilt to $\inr_{8}$ and  $\inr_{7}$ does not swing to $\inr_{6}$. The sequence  $\inr_{0}$, $\inr_{1}$, \dots, $\inr_{24}$ is an \pseq{} from $\inp$ to $\inq$, and it remains an \pseq{} if we omit $\inr_{7}$ and~$\inr_{8}$. In Figure~\ref{figcyclM6}, the sequence $\inr_0$, $\inr_1$, \dots, $\inr_{14}=\inr_0$ is a cyclic \pseq{} in $M_6$.

\begin{remark}\label{remswtlslopes} If the diagram of $L$ belongs to the class $\mathcal C_1$ defined in Cz\'edli~\cite{czgdiagrectext}, then \eqref{defSsdb} and \eqref{defSsdc} from Definition~\ref{defSsd} can be formulated in the following, more visual way; see \cite{czgdiagrectext}. Namely, for \emph{distinct} edges $\inr$ and $\ins$ of the \emph{same} 4-cell, 
\begin{enumerate}
\item[(ii)$'$]  $\inr$ \emph{swings} to $\ins$ if  $1_\inr=1_\ins$ and the slope of $\ins$ is neither $45^\circ$, nor $135^\circ$.
\item[(iii)$'$]  $\inr$ \emph{tilts} to $\ins$ if  $0_\inr=0_\ins$ and the slope of $\ins$ is neither $45^\circ$, nor $135^\circ$.
\end{enumerate}
\end{remark}

Note that the diagrams in this paper belong to $\mathcal C_2$, which is a subclass of $\mathcal C_1$; the reader may want (but does not need) to see \cite{czgdiagrectext} for details. Note also that, by  \cite[Observation 6.2]{czgdiagrectext}, the condition that ``the slope of $\ins$ is neither $45^\circ$, nor $135^\circ$'' above is equivalent to the condition 
that ``the slope of $\ins$ is strictly between neither $45^\circ$ and $135^\circ$''. 
The following result was proved in Cz\'edli and Gr\"atzer~\cite{czgggswing}. 

\begin{gslemma}[Cz\'edli and Gr\"atzer~\cite{czgggswing}]\label{genswinglemma} Let $L$ be a planar semimodular lattice, and let $\inp$ and $\inq$ be prime intervals of $L$. Then $\pair{0_\inq}{1_\inq}\in\con(\inp)$ if and only if there is an \pseq{} from $\inp$ to $\inq$.
\end{gslemma}

For a bit stronger but more technical variant of the swing lemma, see Theorem~\ref{thmstrongswinglemma}. Although the proof in Gr\"atzer and Cz\'edli~\cite{czgggswing} is short, it relies on a particular case, which we will call \emph{slim swing lemma}; see Section~\ref{sectionslimsl}. The slim swing lemma is due to  Gr\"atzer~\cite{swinglemma} and there is another proof in Cz\'edli~\cite{czgdiagrectext}, but both these papers give long and complicated proofs. Furthermore, the proof in \cite{czgggswing} uses a lemma from Cz\'edli~\cite{czgrepres} that needed a three-page long proof.
So, if \cite{swinglemma} (or the relevant part of \cite{czgdiagrectext}) and the three pages from \cite{czgrepres} are also counted, the proof of the swing lemma is quite long. Our main goal is to give a much shorter proof.

\subsection*{Second target: the Swing lattice game} 
Section~\ref{sectiongame} describes our online game called \emph{Swing lattice game}. Its purpose is to increase the popularity of lattice theory in an entertaining way. Besides the swing lemma, the game  is also motivated by mechanical pinball games with flippers. A computer program  realizing the game is available from the authors websites. Note that the game has a screen saver mode. 
Another motivation for the Swing lattice game is that this paper is devoted to Professor Emeritus B\'ela Cs\'ak\'any, who is not only a highly appreciated algebraist and the scientific father or grandfather of almost all algebraists in Szeged, but he is interested in mathematical games. This interest is witnessed by, say,   Cs\'ak\'any~\cite{csbhun} and
Cs\'ak\'any and Juh\'asz~\cite{csbjr}.

\color{black}

\section{Preliminaries and a survey}
Besides collecting some known facts that will be needed in our proof, the majority of this  section gives a restricted survey on planar semimodular lattices. For a more extensive survey, the reader can resort to Cz\'edli and Gr\"atzer~\cite{czgggltsta}.

A lattice $L$ is \emph{slim} if $J(L)$, the poset of join-irreducible elements of $L$, contains no 3-element antichain.  By convention, both slim lattices and planar lattices are \emph{finite} by definition. By a \emph{diamond} we mean an $M_3$ (sub)lattice; see on the left of Figure~\ref{figcyclM6}. We know from Cz\'edli and Gr\"atzer~\cite[Lemma 3-4.1]{czgggltsta} that slimness implies planarity. Hence, we will drop ``planar'' from ``slim planar semimodular''. 
A sublattice $S$ of a lattice $L$ is a \emph{cover-preserving sublattice} if for any $a,b\in S$, $a\prec_S b$ implies that $a\prec_L b$. 
By Cz\'edli and Gr\"atzer~\cite[Thm.\ 3-4.3]{czgggltsta} or, originally, by Cz\'edli and Schmidt~\cite{czgschtJH} and Gr\"atzer and Knapp~\cite{gratzerknapp1}, 
a planar semimodular lattice is slim iff it contains no diamond  iff it contains no cover-preserving diamond. 
For example, by Cz\'edli and Gr\"atzer~\cite[Theorem 3-4.3]{czgggltsta} or by Proposition~\ref{propczsdB},  Figure~\ref{figslima} is a slim semimodular lattice. Also, if we omit the four black-filled elements from
the planar semimodular lattice given Figure~\ref{figpsa}, then we obtain a slim semimodular lattice.

\begin{figure}[ht]
\centerline
{\includegraphics[scale=1.0]{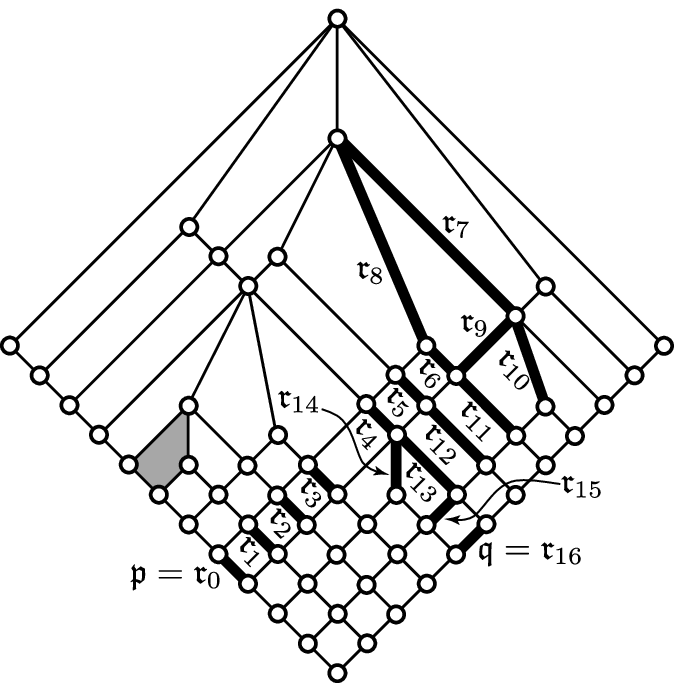}}
\caption{An \sseq{} from $\inp$ to $\inq$ in a slim semimodular (actually, a slim rectangular) lattice}\label{figslima}
\end{figure}

In (the fixed planar diagram of)  a planar semimodular lattice $L$, let $a<b$ but $a\nprec b$. If $C_1$ and $C_2$ are maximal chains in the interval $[a,b]$ such that $C_1\cap C_2=\set{a,b}$ and  every element of $C_2\setminus\set{a,b}$ is on the right of $C_1$, then the elements of $[a,b]$ that are simultaneously on the right of $C_1$ and on the left of $C_2$ form a \emph{region} of (the diagram of) $L$. 
Note that $C_1\cup C_2$ is a subset of this region. For example, the elements belonging to the   grey area in the second lattice  of  Figure~\ref{figindstep} form a region denoted by $R$. 
We know from Kelly and Rival~\cite[Prop.\ 1.4 and Lemma 1.5]{kellyrival} that, in (the fixed planar diagram of) a planar lattice,
\begin{equation}
\parbox{6.4cm}{every interval is a region and every region is a cover-preserving sublattice.}
\label{eqtxtKellyRival}
\end{equation}
If we drop the condition $C_1\cap C_2=\set{a,b}$ above, then we obtain a union (actually, a so-called glued sum) of regions, which is clearly still a sublattice. More precisely, for elements $a<b$ in a planar lattice $L$, 
\begin{equation}
\parbox{7.6cm}{if $C_1$ and $C_2$ are maximal chains in $[a,b]$ such that every element of $C_2$ is on the right of $C_1$, then $\{x\in [a,b]: x$ is on the right of $C_1$ and on the left of $C_2\}$ is a cover-preserving sublattice of $L$.}
\label{eqtxtmltKR}
\end{equation}
For more about planar lattice diagrams (of planar semimodular lattices), the reader may but need not look into Kelly and Rival~\cite{kellyrival} (or Cz\'edli and Gr\"atzer \cite{czgggltsta}). %
Minimal regions are called \emph{cells}.  For example, the grey area in Figure~\ref{figslima} and that in the first lattice of Figure~\ref{figindstep} are cells; actually, they are  \emph{$4$-cells} since they are formed by four vertices and four edges. In (the planar diagram of) a planar semimodular lattice, every cell is a 4-cell; see Gr\"atzer and Knapp~\cite[Lemma 4]{gratzerknapp1}. Hence, by Cz\'edli and Schmidt~\cite[Lemma 13]{czgschvisual}, 
\begin{equation}
\parbox{9 cm}{If $x$ and $y$ are neighboring lower covers of an element $z$ in a planar semimodular lattice, then $\set{x\wedge y, x, y, z}$ is a 4-cell.}
\label{eqtxttwcodhBtn}
\end{equation}
A 4-cell can be turned into a diamond by adding a new element into its interior. The new element is called an \emph{eye} and we refer to this step as \emph{adding an eye}. Note that after adding an eye, one ``old'' 4-cell is replaced with two new 4-cells. 
We know from Cz\'edli and Gr\"atzer~\cite[Cor.\ 3-4.10]{czgggltsta} that 
\begin{equation}
\parbox{8.5cm}{
every planar semimodular $L$ lattice is obtained from a slim semimodular lattice $L_0$ by adding eyes, one by one.}
\label{eqtxtbddyS}
\end{equation}
Note that $L_0$ is a sublattice of $L$. Although $L_0$ is not unique as a sublattice, it is unique up to isomorphism; see \cite[Lemma 3-4.8]{czgggltsta}. 
We call $L_0$ the \emph{full slimming} of $L$, while $L$ is an \emph{antislimming} of $L_0$. 
Note that  the full slimming of $L$  can be obtained from $L$ by omitting all eyes. 
For example, the full slimming $L_0$ of the planar semimodular lattice $L$ given in Figure~\ref{figpsa} is obtained by omitting the four black-filled elements. Conversely, we obtain $L$ from $L_0$ by adding eyes, four times. 
Based on, say, Gr\"atzer and Knapp~\cite[Lemma 8]{gratzerknapp1}, eyes are easy to recognize: an element $x$ of a planar semimodular lattice is an eye if and only if $x$ is doubly (that is, both meet and join) irreducible, its unique lower cover, denoted by $\dstar x$, has at least three covers, and $x$ is neither the leftmost, nor the rightmost cover of $\dstar x$.  
\begin{figure}[ht]
\centerline
{\includegraphics[scale=1.0]{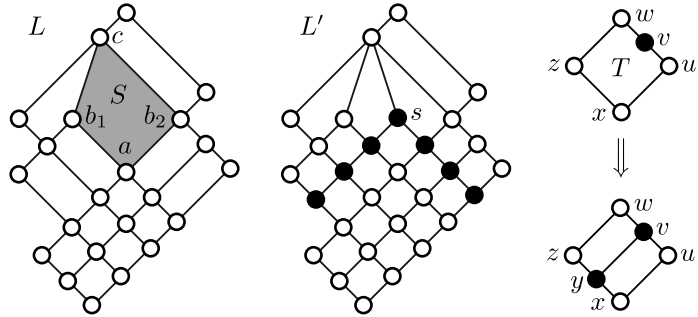}}
\caption{Inserting a fork}\label{figaddfork}
\end{figure}

\begin{definition}\label{defstrswtlt} 
Let  $\inr$ and $\ins$ distinct edges of the \emph{same} 4-cell in (the planar diagram of) a planar semimodular lattice $L$, and let $\Eye(L)$ denote the set of eyes of $L$. 
\begin{enumerate}
\item[(ii)$'$]  $\inr$ \emph{strongly swings} to $\ins$ if  $\inr$ swings to $\ins$ and, in addition, the implication $0_\inr\in \Eye(L) \Longrightarrow 0_\ins\in\Eye(L)$ holds.
\end{enumerate}
The sequence $\vec\inr$ in \eqref{eqsseqpseq} will be called an \spseq{} if for each $i\in\set{1,\dots,n}$, $\inr_{i-1}$ is cell-perspective to or tilts to or  strongly swings to $\inr_i$. (The acronym ``SSL'' comes from ``strong swing lemma''.)
\end{definition}

In a planar semimodular lattice, 
\begin{equation}
\text{every \spseq{} is a \pseq{},}
\label{eqtxtspseqpseq}
\end{equation}
but not conversely. For example, in Figure~\ref{figpsa}, the two-element sequence $\inr_{18}$, $[x,y]$  is an \pseq{} but not an \spseq. Now, we are in the position to formulate the following theorem. By \eqref{eqtxtspseqpseq}, it implies Lemma~\ref{genswinglemma}, the swing lemma.

\begin{strswlemma}[Cz\'edli and Gr\"atzer~\cite{czgggswing}]\label{thmstrongswinglemma} 
If $L$ is a planar semimodular lattice and  $\inp$ and $\inq$ are prime intervals of $L$, then the following two implications hold.
\begin{enumeratei}
  \item\label{thmstrongswinglemmaa} If there exists an \pseq{} from $\inp$ to $\inq$ $($in particular, if there is an \spseq{} from $\inp$ to $\inq)$, then $\pair{0_\inq}{1_\inq}\in\con(\inp)$.
  \item\label{thmstrongswinglemmab} Conversely, if $\pair{0_\inq}{1_\inq}\in\con(\inp)$, then  there exists an \spseq{} from $\inp$ to $\inq$.
\end{enumeratei}
\end{strswlemma}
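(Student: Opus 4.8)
The plan is to prove the two implications separately, and to reduce the general (antislimmed) case to the slim case using \eqref{eqtxtbddyS}.

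\medskip

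\noindent\textbf{Implication \eqref{thmstrongswinglemmaa}.}
First I would show that the relation ``there is an \pseq{} from $\inp$ to $\inq$'' implies $\pair{0_\inq}{1_\inq}\in\con(\inp)$. By induction on the length $n$ of the sequence, it suffices to treat $n=1$, i.e.\ to check that each of the three atomic steps -- cell-perspectivity, swinging, tilting -- forces $\pair{0_\ins}{1_\ins}\in\con(\inr)$. For cell-perspectivity this is the classical fact that opposite sides of a $4$-cell are perspective, hence generate the same congruence (one direction of \eqref{eqtxtKellyRival} together with the $4$-cell being a cover-preserving $M_3$-free quotient). For swinging, if $1_\inr=1_\ins$ and both are lower covers of $z=1_\inr$, then $\con(\inr)$ collapses $z$ with $0_\inr$; using semimodularity and the fact that $0_\inr\vee 0_\ins$ lies below $z$ (actually $\set{0_\inr\wedge 0_\ins,0_\inr,0_\ins,z}$ behaves like the top of a diamond/$4$-cell configuration), one transports the collapse to $[0_\ins,z]=\ins$; the ``neither leftmost nor rightmost'' hypothesis is exactly what guarantees there is a witnessing lower cover on each side so the congruence argument closes. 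Tilting is the order-dual computation. Since an \spseq{} is an \pseq{} by \eqref{eqtxtspseqpseq}, the parenthetical remark follows for free.

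\medskip

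\noindent\textbf{Implication \eqref{thmstrongswinglemmab}.}
This is the substantive half. Here I would invoke the \emph{slim swing lemma} (to be stated and reproved in Section~\ref{sectionslimsl}) as a black box: in a slim semimodular lattice, $\pair{0_\inq}{1_\inq}\in\con(\inp)$ iff there is an \pseq{} from $\inp$ to $\inq$ built only from cell-perspectivity and swinging (no eyes exist, so swinging is automatically strong and tilting can be avoided). Then I lift to an arbitrary planar semimodular $L$ via \eqref{eqtxtbddyS}: write $L$ as obtained from its full slimming $L_0$ by adding eyes one at a time, $L_0\subseteq L_1\subseteq\dots\subseteq L_m=L$, where $L_{k+1}$ is $L_k$ with one eye inserted into some $4$-cell. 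The proof is by induction on $m$; the inductive step analyzes what happens to congruence generation and to \pseq{}s when a single eye $e$ is inserted into a $4$-cell $\set{c,u_1,u_2,z}$ (so $\dstar e=c$ — wait, dually: $e$ covers $c$ and is covered by $z$ — and $e$ is a middle cover of $c$, the bottom). Any prime interval of $L_{k+1}$ is either a prime interval of $L_k$, or one of the two new edges $[c,e]$, $[e,z]$. One checks: (a) $\con_{L_{k+1}}(\inp)$ restricted to $L_k$ equals $\con_{L_k}(\inp)$ (eyes do not create new collapses among old elements — this uses that $e$ is doubly irreducible and the quotient $L_{k+1}\to L_k$ collapsing $e$ with $z$, say, is a lattice homomorphism whose kernel is an atom in $\con(L_{k+1})$ lying below nothing relevant); and (b) the two new edges are collapsed by $\con_{L_{k+1}}(\inp)$ exactly when, respectively, $[c,z]$-type old edges in that cell are, and one exhibits the corresponding one- or two-step extension of the \pseq{}: $\inr_{n-1}$ swings to $[c,e]$ or tilts appropriately, and crucially this swing is \emph{strong} because the new $0$-endpoint $c$ is not an eye (it was already in $L_k$, in fact in $L_0$ if we are careful, but certainly $c\notin\Eye(L_{k+1})$ unless... no, $c$ could be an eye of an earlier stage) — so one has to check the strong-swing side condition $0_\inr\in\Eye\Rightarrow 0_\ins\in\Eye$ survives, which is the whole reason the \emph{strong} swing lemma is stated this way.

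\medskip

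\noindent\textbf{Main obstacle.}
The hard part will be the bookkeeping in the inductive step of \eqref{thmstrongswinglemmab}: precisely controlling, as eyes are added, how an \pseq{} in the slim lattice $L_0$ (or in $L_k$) gets \emph{rerouted} through the newly inserted eyes so that every swing used remains a \emph{strong} swing, and dually that every forced passage through an eye is accounted for. One must verify that an eye is only ever entered ``in the middle'' of a swing (never as a genuine endpoint of the target prime interval $\inq$ unless $\inq$ itself is incident to an eye, in which case the side condition must already hold), and that cell-perspectivity across the two halves of a split $4$-cell is not needed — tilting/swinging through the eye does the job. Keeping track of ``leftmost/rightmost cover'' conditions after repeated insertions (an inserted eye changes the list of covers of $c$, possibly changing which old edges are now non-extremal) is the delicate point; this is where \eqref{eqtxttwcodhBtn}, the recognition criterion for eyes, and the order in which eyes are added all get used. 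Once the slim case is granted and this rerouting lemma is established, both halves of the theorem drop out, and by \eqref{eqtxtspseqpseq} the plain swing lemma (Lemma~\ref{genswinglemma}) is an immediate corollary.
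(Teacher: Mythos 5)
Your overall strategy for part \eqref{thmstrongswinglemmab} --- take the slim swing lemma as a black box and lift it to $L$ by induction on the number of eyes added via \eqref{eqtxtbddyS} --- is essentially the strategy of the earlier paper \cite{czgggswing}, not of this one, and it runs into two problems. First, this paper never supplies an independent proof of the slim swing lemma: Section~\ref{sectionslimsl} only \emph{states} it, and the logical direction is the reverse of yours --- the slim case is obtained as a corollary of the general theorem (see \eqref{eqtxthgmnB} and the closing Remark of Section~4). So your argument is not self-contained unless you import Gr\"atzer's long proof from \cite{swinglemma}, which is exactly what this paper is written to avoid. Second, and more seriously, your claim (a) in the inductive step is false. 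Inserting an eye $e$ into a $4$-cell $\set{c,u_1,u_2,z}$ creates a cover-preserving diamond $\set{c,u_1,e,u_2,z}\cong M_3$, and $M_3$ is simple; hence in $L_{k+1}$ the congruence generated by $[c,u_1]$ already collapses $[c,u_2]$, which need not happen in $L_k$ (compare the $4$-cell $2\times2$ with $M_3$). So eyes \emph{do} create new collapses among old elements, the restriction of $\con_{L_{k+1}}(\inp)$ to $L_k$ can be strictly larger than $\con_{L_k}(\inp)$, and the induction as you set it up does not close; the ``rerouting'' you flag as the main obstacle is not mere bookkeeping but the actual content you would still have to supply.

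The paper's route is different and avoids both issues: it introduces the relation ``$\inp$ \spspan{}s $[u,v]$'', proves by induction on the \emph{size} of the lattice (first slim, then general via the full slimming and the $M_n$ cyclic sequences of \eqref{eqtxtM6}) that $[a,b]$ \spspan{}s $[a\wedge c,b\wedge c]$ --- statements \eqref{eqtxtindstp} and \eqref{eqtxtindPln} --- and then shows directly that $\ba=\set{\pair xy:\inp\text{ \spspan{}s }[x\wedge y,x\vee y]}$ is a congruence containing $\con(\inp)$, handling joins by semimodularity and \eqref{eqtxbsLsP}. No appeal to the slim swing lemma is made; it drops out as a special case. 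Your treatment of part \eqref{thmstrongswinglemmaa} is acceptable in spirit (the paper also dispatches it as a routine verification on the $\mathsf{S}_7^{(n)}$ configurations), though your sketch of why a swing forces the collapse of $\ins$ is too vague to check as written.
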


By \eqref{eqtxtbddyS},  in order to have a satisfactory insight into planar semimodular lattices, it suffices to describe the slim ones. In order to do so, we need  the following  concepts. 

Based on Cz\'edli and Schmidt~\cite{czgschvisual}, Figure~\ref{figaddfork} visualizes how we \emph{insert a fork} into a 4-cell $S$ of a slim semimodular lattice $L$ in order to obtain a new slim semimodular lattice $L'$. First, we add a new element $s$ into the interior of $S$. Next, we add two lower covers of $s$ that will be on the lower boundary of $S$ as indicated in the figure. 
Finally, we do a series of steps: as long as there is a chain $u\prec v\prec w$ such that $T=\set{x=z\wedge u, z, u, w=z\vee u}$ is a 
4-cell in the original $L$ and $x\prec z$ at the present stage, then we insert a new element $y$ such that $x\prec y\prec z$ and $y\prec v$; see on the right of the figure. The new elements of $L'$, that is, the elements of $L'\setminus L$, are the black-filled ones in Figure~\ref{figaddfork}. 

A doubly irreducible element $x$ on the boundary of a slim semimodular lattice is called a \emph{corner} if it has a unique upper cover $\ustar x$ and a unique lower cover $\dstar x$, $\ustar x$ covers exactly two elements, and $\dstar x$ is covered by exactly two elements.  For example, after omitting the black-filled elements from Figure~\ref{figpsa}, there are exactly two corners, $u$ and $v$. Note that there is no corner in the slim semimodular lattice given by Figure~\ref{figslima}. A \emph{grid} is the (usual diagram of the) direct product of two finite non-singleton chains.


\begin{proposition}[Cz\'edli and Schmidt~\cite{czgschvisual}]\label{propczsdB} 
Every slim semimodular lattice with at least three elements can be obtained from a grid such that 
\begin{enumeratei}
\item\label{propczsdBa}  first we add finitely many forks one by one, 
\item\label{propczsdBb}  and then we remove corners, one by one, finitely many times.
\end{enumeratei} 
Furthermore, all lattices obtained in this way are slim and semimodular.
\end{proposition}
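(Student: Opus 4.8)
The statement has two halves: the \emph{synthesis} claim that every slim semimodular lattice with at least three elements arises, starting from a grid, by inserting finitely many forks and then removing finitely many corners; and the \emph{soundness} (``furthermore'') claim that these operations never leave the class of slim semimodular lattices. I would settle soundness first, since it is used, read backwards, in the proof of synthesis. A grid is plainly slim and semimodular. For inserting a fork into a $4$-cell $S$ (Figure~\ref{figaddfork}): semimodularity of the enlarged lattice is easy to check because the construction is arranged so that every cell of the new diagram is again a $4$-cell; and slimness is preserved by a short direct check that the insertion creates no diamond. For removing a corner $x$: double irreducibility of $x$ forces $a\vee b\neq x\neq a\wedge b$ for all $a,b\in L\setminus\set x$, so $L\setminus\set x$ is a sublattice; it is planar, and every one of its cells is still a $4$-cell --- removing $x$ merely deletes the unique $4$-cell containing it and produces no new cell --- so it is semimodular; and it has no diamond, being a sublattice of the diamond-free $L$, hence it is slim.

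For synthesis I would organize the argument around slim \emph{rectangular} lattices, splitting the proposition into two sub-statements to be proved separately: (1) a slim semimodular lattice is obtained from a grid by inserting forks precisely when it is rectangular; and (2) every slim semimodular lattice with at least three elements is obtained from some slim rectangular lattice by removing corners --- equivalently, can be enlarged to a rectangular one by adding corners. Granting (1) and (2), the passage from a grid through fork insertions to a rectangular lattice and then through corner removals to $L$ is exactly what the proposition asserts. For (1) I would induct on the number of forks, equivalently on size, peeling off a ``last'' fork: in a rectangular lattice the left and right boundary chains are rigid enough that one can locate the apex $s$ and trajectory of a fork that can be un-inserted to leave a smaller rectangular lattice. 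For (2) I would induct on a measure of how far $L$ is from being rectangular, showing that a non-rectangular slim semimodular lattice always admits a corner whose addition (the inverse of a corner-removal) strictly decreases this measure; the extreme case is a chain, which is treated directly --- a chain of length $\ell\ge 2$ is the $\ell\times 2$ grid with $\ell-1$ corners removed.

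The genuine obstacle is the structural work behind (1) and (2): both rest on the fine geometry of slim semimodular diagrams --- trajectories, the shape and singularities of the boundary chains, and the precise local effect of inserting a fork or adding a corner --- and it is in showing that a fork can always be peeled off a rectangular lattice, and that a corner can always be added to a non-rectangular one, that the difficulty lies, not in any computation. By contrast, the recurring checks that each intermediate diagram still has only $4$-cells and still contains no cover-preserving diamond are routine, via Gr\"atzer and Knapp's theorem~\cite{gratzerknapp1} that every cell of a planar semimodular lattice is a $4$-cell and the no-diamond characterization of slimness recalled earlier. (Alternatively, one can run a single induction on $|L|$: if $L$ has a corner $x$, delete it, apply the inductive hypothesis to $L\setminus\set x$, and re-introduce $x$ at the end --- this last step needs a small re-ordering so that all fork insertions still precede all corner removals; and if $L$ is neither a grid nor a chain and has no corner, peel off a removable fork. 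This is the same content repackaged, and it isolates the same two difficulties.)
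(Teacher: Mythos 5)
First, a point of reference: the paper does not prove Proposition~\ref{propczsdB} at all --- it imports it from Cz\'edli and Schmidt~\cite{czgschvisual} as a known black box --- so there is no in-paper proof to measure your proposal against. Judged on its own terms, your architecture (soundness of the two operations, plus the factorization grid $\to$ fork insertions $\to$ slim rectangular lattice $\to$ corner removals $\to$ $L$) is the standard way this material is organized; the paper itself cites \cite[Prop.\ 2.3]{czgschslim2} for the fact that the fork-closures of grids are exactly the slim rectangular lattices. So the plan is reasonable.

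Two genuine gaps remain, however. In the soundness half you infer ``every one of its cells is still a $4$-cell \dots{} so it is semimodular''. That inference is invalid: the Gr\"atzer--Knapp result quoted in the paper (\cite[Lemma 4]{gratzerknapp1}) goes the other way (semimodularity implies that all cells are $4$-cells), and the converse fails --- the dual of $S_7$, the seven-element fork lattice, is a planar lattice all of whose cells are $4$-cells, yet it is not upper semimodular (its atoms $a$ and $b$ satisfy $a\wedge b\prec a$ but $a\not\prec a\vee b$). Preservation of semimodularity under fork insertion and corner removal must therefore be checked against the covering condition itself, or against the full Gr\"atzer--Knapp criterion with its extra hypothesis, not read off from the cell shapes; the same flaw infects your slimness argument, which first needs semimodularity before ``no diamond'' yields ``slim''. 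More seriously, the synthesis half is a plan rather than a proof: its two pillars --- that every non-grid slim rectangular lattice contains a fork that can be un-inserted so as to leave a smaller slim rectangular lattice, and that every non-rectangular slim semimodular lattice admits a corner whose addition strictly decreases a suitable measure --- carry the entire content of the proposition, and for neither do you supply the existence argument; you only indicate that you would induct. Since these are precisely the points where the boundary/trajectory analysis of \cite{czgschvisual} does its real work, the proposal correctly locates the difficulty but does not close it.
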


Note that by Cz\'edli and Schmidt~\cite[Prop.\ 2.3]{czgschslim2}, the lattices we obtain by \eqref{propczsdBa} but without  \eqref{propczsdBb} are exactly the \emph{slim rectangular lattices} introduced by  Gr\"atzer and Knapp~\cite{gratzerknapp3}; see Figure~\ref{figslima} for an example. We can add eyes to these lattices; what we obtain in this way are the so-called \emph{rectangular lattices}; see ~\cite[Prop.\ 2.3]{czgschslim2} and Gr\"atzer and Knapp~\cite{gratzerknapp3}.

\section{Slim swing lemma}\label{sectionslimsl}
The slim lemma was first stated and proved only for slim semimodular lattices; to make a terminological distinction, we will refer to it as the ``slim swing lemma".
\begin{definition}
The sequence $\inr$ from \eqref{eqsseqpseq} is an \emph{\sseq} if for each $i\in\set{1,\dots,n}$, $\inr_{i-1}$ is cell-perspective to or swings to $\inr_i$.
\end{definition}

For example, the edges $\inr_0$, $\inr_1$, \dots, $\inr_{16}$ in Figure~\ref{figslima} form an \sseq.
In a planar semimodular lattice, every \sseq{} is an \pseq{} but, in general, not conversely.  
Since every element of a slim semimodular lattice has at most two covers by  
 Gr\"atzer and Knapp~\cite[Lemma 8]{gratzerknapp1}, tilts are impossible in \emph{slim} semimodular lattices. That is,
\begin{equation}
\parbox{8.2cm}{In a \emph{slim} semimodular lattice, \pseq{}s, \spseq{}s, and  \sseq{}s are exactly the same.}
\label{eqtxthgmnB}
\end{equation}
Therefore, the following statement is a particular case of Lemma~\ref{genswinglemma}.

\begin{sslemma}[Gr\"atzer~\cite{swinglemma}]\label{specswinglemma} Let $L$ be a slim semimodular lattice, and let $\inp$ and $\inq$ be prime intervals of $L$. Then $\pair{0_\inq}{1_\inq}\in\con(\inp)$ if and only if there is an \sseq{} from $\inp$ to $\inq$.
\end{sslemma}

Note that  Gr\"atzer~\cite{swinglemma} states this lemma in another way. In order to see that our version implies his version trivially, two easy observations will be given below. 
For prime intervals $\inp$ and $\inq$, if $1_\inp\vee 0_\inq= 1_\inq$ and  $1_\inp\wedge 0_\inq=0_\inp$, then $\inp$ is \emph{up-perspective} to $\inq$ and $\inq$ is \emph{down-perspective} to $\inp$. \emph{Perspectivity} is the disjunction of up-perspectivity and down-perspectivity. As an important property of \sseq s, we claim that, for prime intervals $\inp$ and $\inq$ in a finite semimodular lattice $L$,
\begin{equation}
\parbox{9cm}{If $\inp$ is up-perspective to $\inq$,  then there is an \sseq{} $\vec\inr=\tuple{\inr_0,\dots,\inr_n}$ from $\inp$ to $\inq$ such that $\inr_{i-1}$ is upward cell-perspective to $\inr_i$ for all $i\in\set{1,\dots, n}$. Conversely, if there is such an $\vec\inr$, then $\inp$ is up-perspective to $\inq$.}
\label{eqtxbsLsP}
\end{equation}
The second part of \eqref{eqtxbsLsP} is trivial. In order to see its first part, assume that  $\inp$ is up-perspective to $\inq$, and  pick  maximal chain $0_\inp=x_0\prec x_1\prec\dots\prec x_n=0_\inq$. For $i\in\set{1,\dots, n}$,  $\set{x_{i-1}, x_i, 1_\inp\vee x_{i-1},1_\inp\vee {x_i}}$ is a covering square by semimodularity. (For more details, if necessary, see the explanation around Figure 1 in Cz\'edli and Schmidt~\cite{czgschthowtoderive}.)  Covering squares are 4-cells by Cz\'edli and Gr\"atzer~\cite[Thm.\ 3-4.3(v)]{czgggltsta},  whence 
there is an \sseq{} $\vec\inr$ from $\inp$ to $\inq$ with the required property. This proves 
\eqref{eqtxbsLsP}. 

It is clear from Cz\'edli and Schmidt~\cite[Lemma 2.8]{czgschtJH}, and it can also be derived from Proposition~\ref{propczsdB} by induction,  that in a slim semimodular lattice,
\begin{equation}
\parbox{8.5cm}{For a \emph{repetition-free} \sseq{} $\vec\inr$ from \eqref{eqsseqpseq}  in a \emph{slim} semimodular lattice, if $\inr_{i-1}$ is up-perspective to $\inr_i$, then $\inr_{j-1}$ is up-perspective to $\inr_j$ for all $j\in\set{1,2,\dots, i}$.}
\label{eqtxtlttEcS}
\end{equation}

Now it is clear that, by  \eqref{eqtxbsLsP} and \eqref{eqtxtlttEcS}, Lemma~\ref{specswinglemma} and 
its original version in Gr\"atzer~\cite{swinglemma} mutually imply each other.

\begin{figure}[ht]
\centerline
{\includegraphics[scale=1.0]{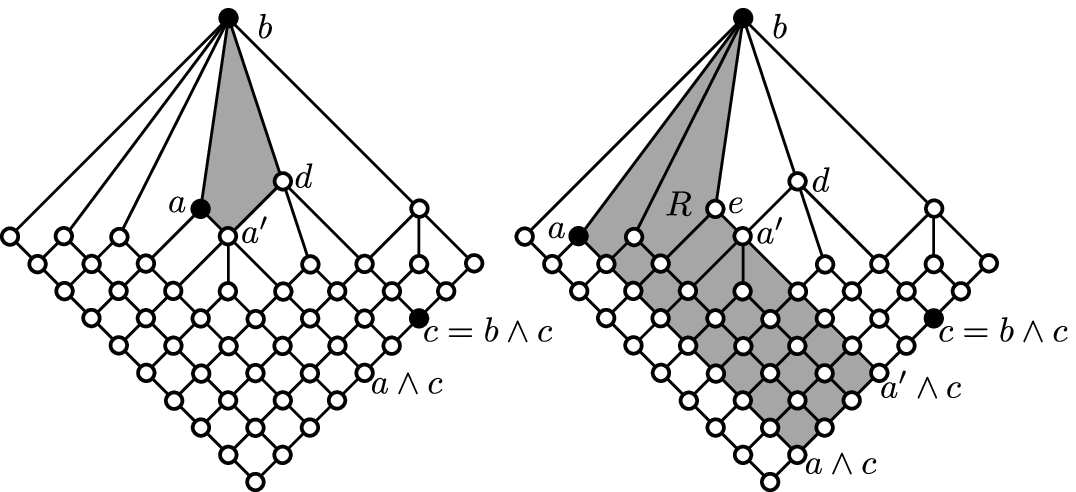}}
\caption{Illustration for \eqref{eqtxtindstp}}\label{figindstep}
\end{figure}

\section{The short proof}
\begin{proof}[Proof of Theorem~\ref{thmstrongswinglemma}]
Part \eqref{thmstrongswinglemmaa}
 follows easily from known results and \eqref{eqtxtspseqpseq}. For example, it follows from Cz\'edli~\cite[Theorems 3.7 and  5.5 (or 7.3)]{czgtrajcolor} and Cz\'edli~\cite[Thm.\ 2.2, Cor.\ 2.3, and Prop.\ 5.10]{czgdiagrectext}, 
; however, the reader will certainly find it more convenient to observe that both $\con(w_\ell,t)$ and $\con(w_r,t)$  
collapses the pairs $\pair{s_i}{t}$ of $\sf S^{(n)}_7$ in \cite[Fig.\ 1]{czgtrajcolor} by routine calculations.

Before proving part \eqref{thmstrongswinglemmab}, some preparation is needed.
For $n\in\set{3,4,5,\dots}$, the  $n+2$-element modular lattice of length 2 is denoted by $M_n$. For example, $M_3$ and $M_6$ are given in Figure~\ref{figcyclM6}. As this figure suggests, it is easy to see that, for $n\in\set{3,4,5,\dots}$,
\begin{equation}
\text{$M_n$ has a cyclic \spseq{} that contains all edges.}
\label{eqtxtM6}
\end{equation}
For a prime interval $\inr$ and elements  $u\leq v$ of a planar semimodular lattice $L$, we will say that $\inr\,$ \emph{\spspan s} (respectively, $\inr\,$ \emph{\sspan s}) the interval $[u,v]$ if there is an $n\in\set{0,1,2,\dots}$ and  there exists a maximal chain $u=w_0\prec w_1\prec \dots \prec w_n=v$ in $[u,v]$ such that, for each $i\in\set{1,\dots,n}$, there is an \spseq{} (respectively, an \sseq{}) from $\inr$ to $[w_{i-1},w_i]$. First, we focus on \sspan{}ning. We claim the following;  see Figure~\ref{figindstep}.
\begin{equation}
\parbox{8.5cm}{If $a,b,c$ are elements of a \emph{slim} semimodular lattice $K$ such that $a\prec b$, then $[a,b]$ \sspan s $[a\wedge c,  b\wedge c]$.}
\label{eqtxtindstp}
\end{equation}
We prove \eqref{eqtxtindstp} by induction on $|K|$. The base of the induction, $|K|\leq 4$, is obvious. We can assume that $c\leq b$, because otherwise we can replace $c$ with $b\wedge c$. Actually, we assume that $c<b$ but $c\nleq a$, since otherwise the satisfaction of \eqref{eqtxtindstp} is trivial. Pick an element $d$ such that $c\leq d\prec b$; see Figure~\ref{figindstep}. Since $c\nleq a$ and $a\prec b$,  $a$ and $d$ are distinct lower covers of $b$. By left-right symmetry, we assume that $a$ is to the left of $d$. There are two cases to consider.

First, assume that among the lower covers of $b$, $a$ is immediately to the left of $d$;  see the first lattice of Figure~\ref{figindstep}. Let $a'=a\wedge d$. By \eqref{eqtxttwcodhBtn}, $\set{a',a,d,b}$ is a 4-cell. Hence, there is a ``one-step'' \sseq{} from $[a,b]$ to $[a',d]$, which consists of  a downwards cell-perspectivity.
Observe that  $a\wedge c=a\wedge (d\wedge c)=(a\wedge d)\wedge c=a'\wedge c$ and the principal ideal $\ideal d$ does not contain $a$. Hence, $|\ideal d|<|K|$. Thus, the induction hypotheses yields that $[a',d]$ \sspan s   $[a'\wedge c, c]=[a\wedge c, b\wedge c]$. 
This is witnessed by some \sseq s; combining them with the one-step \sseq{} mentioned above, we conclude that $[a,b]$ \sspan s $[a\wedge c,  b\wedge c]$, as required.

Second, assume that there is a lower cover of $b$ strictly to the right of $a$ and to the left of $d$. Let $e$ denote the rightmost one of these lower covers and let $a':=e\wedge d$; see the second lattice in Figure~\ref{figindstep}. Since $\set{a',e,d,b}$ is a 4-cell by \eqref{eqtxttwcodhBtn},  there is a one-step \sseq{} from $[e,b]$ to $[a',d]$. Combining it with a sequence of swings from $[a,b]$ to $[e,b]$, we obtain a \sseq{} from $[a,b]$ to $[a',d]$.
Applying the induction hypothesis to $\ideal d$, we obtain that $[a',d]$ \sspan s 
$[a'\wedge c, d\wedge c]= [a'\wedge c, b\wedge c]$. Taking the above-mentioned \sseq{} into account, it follows that $[a,b]$ \sspan s 
$[a'\wedge c, c]= [a'\wedge c, b\wedge c]$.
We know from  Cz\'edli and Gr\"atzer~\cite[Exercise 3.4]{czgggltsta} and it also follows from \eqref{eqtxtKellyRival} that $a\wedge d\leq e\wedge d=a'$. Hence, 
$a\wedge c= a\wedge d\wedge c \leq a'\wedge c$.
In the interval $[a\wedge c,b]$, let $C_2$ be a maximal chain such that  $\set{a'\wedge c, a', e}\subseteq C_2$.

The elements of $[a\wedge c,b]$ on the left of $C_2$ form a cover-preserving sublattice $L_1$, because \eqref{eqtxtmltKR} applies for the leftmost maximal chain of $[a\wedge c,b]$ and $C_2$. Since $a$ is on the left of $e$, $a\in L_1$ by Kelly and Rival~\cite[Prop.\ 1.6]{kellyrival}.
Pick a maximal chain $C_1$ in $L_1$ such that $a\in C_1$, and let $R$ denote the cover-preserving sublattice of $L_1$ determined by $C_1$ and $C_2$ in the sense of \eqref{eqtxtmltKR}. Since $d$ is strictly on the right of $e\in C_2$, $d\notin R$ by Kelly and Rival~\cite[Prop.\ 1.6]{kellyrival}. Thus, $|R|<|K|$. Hence, the induction hypothesis applies for $\tuple{R, a, b, a'\wedge c}$ in the role of $\tuple{K, a, b, c}$, and we obtain that 
$[a,b]$ \sspan s $[a\wedge c,a'\wedge c]$ in $R$. Since $R$ is a cover-preserving sublattice and also a region, the same holds in $K$. Therefore, since $[a,b]$ \sspan s both  $[a\wedge c,a'\wedge c]$ and $[a'\wedge c,b\wedge c]$, it \sspan s $[a\wedge c,b\wedge c]$. This proves \eqref{eqtxtindstp}.

Next, we claim that 
\begin{equation}
\parbox{8.2cm}{If $a,b,c$ are elements of a \emph{planar} semimodular lattice $L$ such that $a\prec b$, then $[a,b]$ \spspan s $[a\wedge c,  b\wedge c]$.}
\label{eqtxtindPln}
\end{equation}
By \eqref{eqtxthgmnB},  \eqref{eqtxtindPln} generalizes \eqref{eqtxtindstp}. 
In order to prove  \eqref{eqtxtindPln}, let $K$ denote  the full slimming of $L$. Its elements and edges will be called \emph{old}, while the rest of elements and edges are \emph{new}; this terminology is explained by \eqref{eqtxtbddyS} and the paragraph following it. The new elements are exactly the eyes.
As in the proof of \eqref{eqtxtindstp}, we can assume that $c<b$ but $c\nleq a$.
First, we deal only with the case where $[a,b]$ is an \emph{old edge}.
Since (the segments of) \sseq s are also \spseq s by \eqref{eqtxthgmnB},  \eqref{eqtxtM6} implies that  
\begin{equation}
\parbox{9.7cm}{if $\ins_1$ and $\ins_2$ are old edges and there is an \sseq{} from $\ins_1$ to $\ins_2$ in $K$, then there is an \spseq{} from $\ins_1$ to $\ins_2$ in $L$.}
\label{eqtxtKthnLvn}
\end{equation}
Hence, for an old prime interval $\ins$ and old elements $u\leq v$, 
\begin{equation}
\parbox{8.8cm}{if $\ins$ \sspan s $[u,v]$ in $K$, then $\ins\,$  \spspan s $[u,v]$ in $L$.}
\label{eqtxtLbnsPns}
\end{equation}
If $c$ is also an old element, then $\set{a\wedge c,b\wedge c}\subseteq K$, so  the validity of \eqref{eqtxtindPln} follows from \eqref{eqtxtindstp} and \eqref{eqtxtLbnsPns}. Hence, we can assume that $c$ is an eye. Let  $\ustar c$ and $\dstar c$  stand for its (unique) cover and lower cover, respectively; they are old elements. Since $c<b$ and $c$ is meet-irreducible, $\ustar c \leq b$.  \eqref{eqtxtindstp} yields that $[a,b]$  \sspan s  $[a\wedge \dstar c, b\wedge \dstar c]=[a\wedge \dstar c,\dstar c]$ in $K$. Since $c\nleq a$, $a\wedge c<c$. Using that  $c$ is join-irreducible, we have that $a\wedge c=a\wedge \dstar c$. Hence, by \eqref{eqtxtLbnsPns}, 
\begin{equation}
\text{$[a,b]$  \spspan s $[a\wedge c,\dstar c]=[a\wedge \dstar c,\dstar c]$ in $L$.}
\label{eqtxtabspstcc}
\end{equation}
On the other hand, $a\wedge \ustar c< \ustar c$, since otherwise $c<\ustar c\leq a$ would contradict $c\nleq a$.  \eqref{eqtxtindstp} yields that $[a,b]$  \sspan s  $[a\wedge \ustar c, b\wedge \ustar c]=[a\wedge \ustar c,\ustar c]$.  Thus, we can pick an old element $w$ such that $a\wedge \ustar c\leq w\prec \ustar c$ and there is an \sseq{} from $[a,b]$ to $[w,\ustar c]$ in $K$. By \eqref{eqtxtKthnLvn}, we have an \spseq{} from $[a,b]$ to $[w,\ustar c]$ in $L$. By left-right symmetry, we can assume that $w$ is to the left of $c$. Listing them from left to right, let $w=w_0, w_1,\dots, w_t$ be the old lower covers of $\ustar c$ that are neither strictly to the left of $w$, nor strictly to the right of $c$; see Figure~\ref{figabwcst} for $t=3$. 
Note that the old elements are empty-filled while the new ones are black-filled, and the elements in the figure do not form a sublattice. Let $w_{t+1}$ be the neighboring old lower cover of $\ustar c$ to the right of $w_t$ in $K$; it is also to the right of $c$. 
By \eqref{eqtxttwcodhBtn}, $\set{w_{i-1}\wedge w_i, w_{i-1}, w_i, \ustar c}$ is a 4-cell of $K$ for $i\in\set{1,\dots,t}$; these 4-cells are colored by alternating shades of grey in the figure. Clearly, $[w_{i-1}, \ustar c]$ strongly swings to $[w_{i}, \ustar c]$ in $K$, for  $i\in\set{1,\dots,t}$. Hence, there is an \sseq{} in $K$ from $[w,\ustar c]=[w_0,\ustar c]$ to $[w_t,\ustar c]$. By \eqref{eqtxtKthnLvn}, we have an \spseq{}  from $[w,\ustar c]$, and thus also from $[a,b]$, to $[w_t,\ustar c]$. 
Also, since $\dstar c$, $\ustar c$, $w_t$, $w_{t+1}$,  and the lower covers of $\ustar c$ between $w_t$ and $w_{t+1}$ form a region in $L$ and a cover-preserving sublattice $M_n$ for some $n$, \eqref{eqtxtM6} allows us to continue the above-mentioned \spseq{}  to $[\dstar c, c]$. Hence, $[a,b]$ \spspan s $[\dstar c, c]=[\dstar c, b\wedge c]$ in $L$.
 This fact and \eqref{eqtxtabspstcc} yield that  $[a,b]\,$ \spspan s $[a\wedge c, b\wedge c]$ in $L$, proving \eqref{eqtxtindPln} for \emph{old edges} $[a,b]$.

Second, we assume that  $[a,b]$ is a new edge. If $b$ is an eye, which has only one lower cover, then $c<b$ gives that $c\leq a$, whence $[a\wedge c,  b\wedge c]$ is a singleton, which is clearly \spspan{}ned. So we can assume that $a$ is an eye with upper and lover covers $\ustar a=b$ and $\dstar a$, respectively. Let $S=\set{\dstar a,w_\ell,w_r,b}$ denote the 4-cell of $K$ into which $a$ has been added. Here this is understood so that several eyes could have been added to this 4-cell simultaneously, whence $[\dstar a,b]_L$ is isomorphic to $M_n$ for some $n\in\set{3,4,\dots}$. 
Applying \eqref{eqtxtM6} to $[\dstar a,b]_L$ and using \eqref{eqtxtKellyRival}, we obtain that
\begin{equation}
\text{$[a,b]$ \spspan{}s both  $[\dstar a, w_r]$ and  $[w_r, b]$ in $L$.}
\label{eqtxtAbsslPsn}
\end{equation}
By the already proved  ``old edge version'' of \eqref{eqtxtindPln}, 
\begin{equation}
\text{$[\dstar a, w_r]$ \spspan{}s $[\dstar a\wedge c, w_r\wedge c]$ and  $[w_r, b]$ \spspan{}s  $[w_r\wedge c, b\wedge c]$.}
\label{eqtxtdzBnWqVy}
\end{equation} 
In \eqref{eqtxtAbsslPsn}, prime intervals are \spspan{}ned, whence  \eqref{eqtxtAbsslPsn} yields \spseq{}s. Combining these \spseq{}s with those provided by \eqref{eqtxtdzBnWqVy} and using transitivity, we obtain that $[a,b]$
\spspan{}s $[\dstar a\wedge c, b\wedge c]$. Hence, we need to show only that $\dstar a\wedge c=a\wedge c$. If we had that $a\leq c$, then $a\prec b$ and $b<c$ would give that $a=c$, contradicting $c\nleq a$. Thus, $a\nleq c$ and $a\wedge c < a$. Since $\dstar a$ is the only lower cover of $a$, we have that $a\wedge c\leq \dstar a$ and so  $a\wedge c\leq \dstar a\wedge c$. Since the converse inequality is obvious,  $a\wedge c = \dstar a\wedge c$, as required. This completes the proof of \eqref{eqtxtindPln}.

\begin{figure}[ht] 
\centerline
{\includegraphics[scale=1.0]{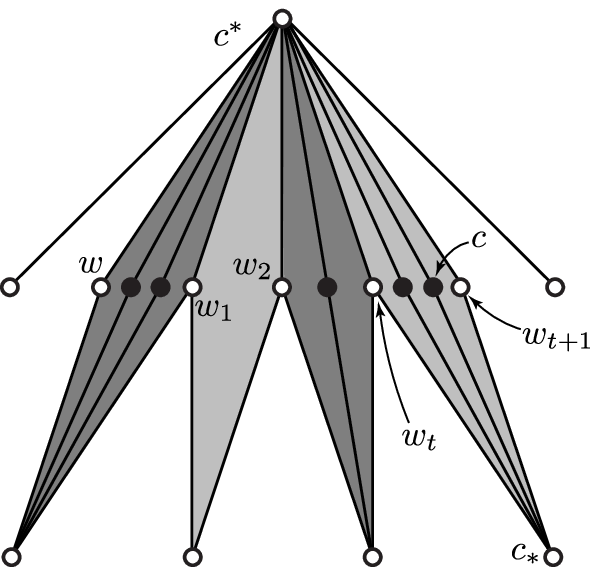}}
\caption{From $[w,\ustar c]$ to $[\dstar c, c]$}\label{figabwcst}
\end{figure}

Next, let $\ba=\{\pair xy\in L^2: \inp\,\text{ \spspan s }[x\wedge y, x\vee y]\}$, where $\inp$ is the prime interval from Theorem~\ref{thmstrongswinglemma}\eqref{thmstrongswinglemmab}.
We are going to show that $\ba$ is a congruence.
Obviously, $\pair x y\in\ba\iff \pair{x\wedge y}{x\vee y}\in\ba$ and 
\begin{equation}
\bigl(x\leq y\leq z, \text{ } \pair x y\in\ba, \text{ and } \pair y z\in\ba \bigr) \Longrightarrow \pair x z\in\ba.
\label{equptrN}
\end{equation} 
Hence, by Gr\"atzer~\cite[Lemma 11]{rGrLTFound}, it suffices to show that whenever $x\leq y$, $\pair x y\in \ba$, and $z\in L$, then $\pair{x\vee z}{y\vee z}\in\ba$ and $\pair{x\wedge z}{y\wedge z}\in\ba$. To do so, pick a maximal chain $x=u_0\prec u_1\prec\dots\prec u_n=y$ that witnesses $\pair x y\in\ba$. Then, for each $i\in\set{1,\dots,n}$, there is an \spseq{} from $\inp$ to $[u_{i-1},u_i]$. By \eqref{eqtxtindPln},  $\pair{u_{i-1}\wedge z}{u_i\wedge z}\in\ba$ for $i\in\set{1,\dots,n}$, and 
\eqref{equptrN} yields that  
$\pair{x\wedge z}{y\wedge z}=\pair{u_{0}\wedge z}{u_n\wedge z}\in\ba$. By semimodularity, 
either  $[{u_{i-1}},{u_i}]$ is up-perspective to  $[{u_{i-1}\vee z},{u_i\vee z}]$, or  ${u_{i-1}\vee z}={u_i\vee z}$. Hence, either by \eqref{eqtxbsLsP} or trivially,  $\pair{u_{i-1}\vee z}{u_i\vee z}\in\ba$. Thus, \eqref{equptrN} implies that  
$\pair{x\vee z}{y\vee z}=\pair{u_{0}\vee z}{u_n\vee z}\in\ba$, and we have shown that $\ba$ is a congruence.

Finally, since $\ba$ collapses $\inp$, we have that $\con(\inp)\subseteq \ba$. So if $\pair{0_\inq}{1_\inq}\in\con(\inp)$, then the containment $\pair{0_\inq}{1_\inq}\in\ba$ and the definition of $\ba$ yield an \spseq{} from $\inp$ to $\inq$. This completes the proof of the slim swing lemma.
\end{proof}

\begin{remark} For a \emph{slim} semimodular lattice $L$, \eqref{eqtxtindPln} is equivalent to \eqref{eqtxtindstp} by \eqref{eqtxthgmnB}. Actually, \eqref{eqtxtindPln} is not needed in this case. In this way, we obtain a proof for the
Swing slim lemma (Lemma~\ref{specswinglemma}) that is much shorter than the proof above.
\end{remark}

\section{Swing lattice game}\label{sectiongame}
In order to describe the essence of our online game, the \emph{Swing lattice game}, we need only two concepts. First, in  Cz\'edli~\cite{czgdiagrectext}, a class $\sf C_2$ of aesthetic slim semimodular lattice diagrams has been introduced. Instead of  repeating the long definition of $\sf C_2$ here, we only mention that the diagrams  in Figures~\ref{figpsa}, \ref{figslima}, and \ref{figindstep} and $L'$ in Figure~\ref{figaddfork} belong to $\sf C_2$, but  
the diagrams in Figure~\ref{figcyclM6} and $L$ in Figure~\ref{figaddfork} do not.
Second,  
an \pseq{} $\vec\inr$ from \eqref{eqsseqpseq} is called an \emph{\gseq} if, for $i\in\set{1,2,\dots,n}$, $\inr_{i-1}\neq \inr_i$. (The acronym comes from ``Swing lemma game''.)  
For the player, who can see the diagram, the exact definition of $\sf C_2$ is not at all important.

In order to avoid  the concept of \gseq{}s, which may cause difficulty for a non-mathematician player, the program 
says simply that a \emph{monkey} keeps moving from edge to edge such that the two edges in question have to belong to the same 4-cell. The monkey can jump or swing or tilt (these steps are easily described in a plain language), but it cannot move back to the edge it came from in the very next step.
The purpose of the game is to make sure that a random \gseq{}  $\vec\inr$ continues as long as possible in a slightly varying diagram $L'$, to be specified later. In the language of the game, which we will use frequently below, the monkey should live as long as the player's luck and, much more significantly, his skill allows. The recent position, $\inr_i$, of the monkey is always indicated by a red thick edge.

At the beginning of the game, the program displays a randomly chosen diagram $L\in\sf C_2$ of a given length. This $L$ is fixed for a while. In order to obtain a bit larger planar semimodular lattice diagram $L'$, the player is allowed to add an eye to one of the 4-cells of $L$ (by a mouse click). Whenever he adds a new eye, the old one disappears; this action is called a \emph{change of the eye}.   In this way, $L'$ is varying but the equality $|L'\setminus L|=1$ always holds. Besides the edges of $L$, which are called \emph{original edges}, $L'$ has two additional edges, the \emph{new edges}. 
In order to  influence the monkey's lifetime, 
\begin{equation}
\text{the player's main tool is to change the eye frequently.}
\label{eqtxtchnEye}
\end{equation}
If the player clicks on a 4-cell while the monkey is moving between two old edges or when it has just arrived at an old edge, then the eye is immediately changed.  However, if the monkey is moving from an old edge to a new one or conversely, then the change is delayed till the monkey arrives at an old edge.
At the beginning,
\begin{equation}
\text{the player has three seconds to choose an edge $\inr_0$ of $L$;}
\label{eqtxtinitedgE}
\end{equation} if he is late, then the computer chooses one randomly. 
After departing from $\inr_0$, the monkey moves at a constant speed at the beginning; later, in order to increase the difficulty, this speed slowly increases.
If the monkey can make several moves, then the program chooses the actual move randomly. From time to time, the program turns a 4-cell into a \emph{bonus cell}, indicated by grey color; if the monkey can jump or swing between two edges of the grey cell within ten moves, then it earns an extra life. Similarly,
the program also offers \emph{candidate cells} in blue color; 
\begin{equation}
\parbox{6.5cm}{if the player accepts the candidate cell by clicking on it within three moves, then this 4-cell becomes a purple \emph{adventure cell}.}
\label{eqtxtadVent}
\end{equation}
The monkey earns two extra lives if it jumps or swings between two edges of the adventure cell within 20 moves but it looses a life otherwise. Also, the monkey looses a life when no move is possible; this can happen only at a boundary edge of the diagram. If a life is lost but the monkey still has at least one life, then the game continues on a new random diagram. When the monkey has no more lives left, the game terminates.

The player, if quick enough, can always save the monkey at boundary edges by using \eqref{eqtxtchnEye}. Also, using \eqref{eqtxtchnEye} appropriately, the player can increase the probability that the monkey will go in a desired direction. In order to make a good decision
how to use \eqref{eqtxtinitedgE}, when to use \eqref{eqtxtadVent}, and when and how to apply \eqref{eqtxtchnEye},  the player should have some experience and insight into the process. Hence, the Swing lattice game is not only a reflex game.

The game is realized by a JavaScript program; see Cz\'edli and Makay~\cite{czgmgthegame}. Most browsers, like Mozilla, can run this program automatically.

The diagrams of length $n$ in $\sf C_2$ are conveniently given by their Jordan-H\"older permutations belonging to the symmetric group $S_n$. 
Since not every diagram in $\sf C_2$ of a given length is appropriate for the game, the program defines the concept of ``good diagrams''. For example, neither a distributive diagram, nor a glued sum decomposable diagram is good.
We have characterized goodness in terms of permutations. Whenever a new diagram is needed,  the program generates a random good permutation $\pi\in S_n$, and the diagram is derived from $\pi$.  
The lattice theoretical background of this algorithm is not quite trivial. However,  instead of going into details in the \emph{present} paper, we only mention that several tools given by Cz\'edli~\cite{czgcoord} and \cite{czgdiagrectext} and Cz\'edli and Schmidt~\cite{czgschperm} have extensively been used.

\color{black}

\end{document}